\def\BibTeX{{\rm B\kern-.05em{\sc i\kern-.025em b}\kern-.08em
    T\kern-.1667em\lower.7ex\hbox{E}\kern-.125emX}}
\begin{document}
    \title{Stable Linear System Identification with Prior Knowledge by Riemannian  Sequential Quadratic Optimization}
    \author{Mitsuaki Obara, Kazuhiro Sato, \IEEEmembership{Member, IEEE}, Hiroki Sakamoto, Takayuki Okuno, and Akiko Takeda
    \thanks{Submitted on December 28, 2021 and revised on September 15, 2023. This work was supported in part by the Japan Society for the Promotion of Science KAKENHI under Grant 23K03899, 22KJ0563, 21J20493, 20K14760, 20K19748 and 19H04069, and was part of the results of Value Exchange Engineering, a joint research project between Mercari, Inc. and the RIISE.}
    \thanks{M. Obara is with the Department of Mathematical Informatics, Graduate School of Information Science and Technology, the University of Tokyo, Tokyo, 113-8656 Japan (e-mail: mitsuaki\_obara@mist.i.u-tokyo.ac.jp)}
    \thanks{K. Sato is with the Department of Mathematical Informatics, Graduate School of Information Science and Technology, the University of Tokyo, Tokyo, 113-8656 Japan (e-mail: kazuhiro@mist.i.u-tokyo.ac.jp)}
        \thanks{H. Sakamoto is with the Department of Mathematical Informatics, Graduate School of Information Science and Technology, the University of Tokyo, Tokyo, 113-8656 Japan (e-mail: soccer-books0329@g.ecc.u-tokyo.ac.jp)}
    \thanks{T. Okuno is with the Department of Science and Technology, Faculty of Science and Technology, Seikei University, Tokyo, 180-8633, Japan (e-mail: takayuki-okuno@st.seikei.ac.jp)}
    \thanks{A. Takeda is with the Department of Creative Informatics, Graduate School of Information Science and Technology, the University of Tokyo, Tokyo, 113-8656 Japan, and the Generic Technology Research Group, Center for Advanced Intelligence Project, RIKEN, Tokyo, 103-0037 Japan (e-mail: takeda@mist.i.u-tokyo.ac.jp, akiko.takeda@riken.jp)}
    }
    
    \maketitle
    
    \begin{abstract}
        We consider an identification method for a linear continuous time-invariant autonomous system from noisy state observations.
        In particular, we focus on the identification to satisfy the asymptotic stability of the system with some prior knowledge.
        To this end, we propose to model this identification problem as a Riemannian nonlinear optimization (\RNLO{}) problem, where the stability is ensured through a certain Riemannian manifold and the prior knowledge is expressed as nonlinear constraints defined on this manifold.
        To solve this \RNLO{}, we apply the Riemannian sequential quadratic optimization (\RSQO{}) that was proposed by Obara, Okuno, and Takeda (2022) most recently.  
        {\RSQO{}} performs quite well with theoretical guarantee to find a point satisfying the Karush-Kuhn-Tucker conditions of \RNLO{}. 
        In this paper, we demonstrate that the identification problem can be indeed solved by {\RSQO{}} more effectively than competing algorithms.
    \end{abstract}
    
    \begin{IEEEkeywords}
          Riemannian sequential quadratic optimization, Riemannian nonlinear optimization, Stable linear system, System identification.  %
    \end{IEEEkeywords}
    
    \section{Introduction} \label{sec:introduction}
        \IEEEPARstart{S}{ystem} identification of a linear continuous time-invariant autonomous system 
        \begin{align}\label{eq:conttimesys}
            \dot{\state[]}\paren*{\timevar} = \A \state[] \paren*{\timevar}
        \end{align}
        with the state vector $\state[]\paren*{\timevar} \in \setR[\dime]$ is a task to estimate $\A\in\setR[\dime\times\dime]$ from measured state data and is one of the most important topics to predict the future state of a real system.
        System identification has been investigated for many years in several settings.
        For example, prediction error methods~\cite{Ljung99SysIdentifBook,Satoetal20RiemIdentifSymPosSys,SatoSato17RiemSysIdentifLinearMIMO,McKelveyetal04DataDrivenLocalCoordforMultvarLinSys,WillsNinness09GradbsdSearchforMultvarSysEsts} and subspace identification methods~\cite{Katayama05SubspaceMethodsBook,Campietal08IterIdentifMethodLinContTimeSys,OverscheeMoor94N4SID,VerhaegenHansson16N2SID,Verhaegen94MOESPIdentifDeterministicPartMIMO} are conventional identification methods for linear time-invariant state space systems.
        Dynamic mode decomposition (\DMD{}) deals with the system of the particular form \cref{eq:conttimesys}, which can be possibly large-scale one such as fluid flows~\cite{Kutzetal16DMDBook, HeilandUnger21IdentifLTIwithDMD}.
        
In real applications, it is often crucial to identify a system satisfying the asymptotic stability.Yet, it is completely non-trivial to ensure the stability in the identification methods such as those above; 
        for example, \DMD{} spectrum has been shown to be considerably sensitive to measurement noise and hence its resultant system may be unstable~\cite{Dawsonetal16CharactCorrSensorNoiseDMD}.
        Several approaches have been investigated to overcome the difficulty: Subspace identification methods with guaranteed stability have been proposed in~\cite{Gesteletal01IdentifStabModelinSubspcIdentifUsingReg,LacyBernstein03SubspcIdentifwithStabUsingCstrOpt, MillerCallafon13SubspcIdentifwithEigenCstrs}.
        In \cite{boots2007constraint}, a constraint generation approach has been proposed.
        A Lagrange relaxation is used to ensure several types of stability including the asymptotic stability for linear time-invariant state space models~\cite{Umenbergeretal18MaxLHIdentifStabLinDynSys} as well as a different stability called the global incremental $\ell^2$ stability for nonlinear systems~\cite{UmenbergerManchester19ConvBoundfoeEqErrinStabNonlinIdentif,UmenbergerManchester19SpecialIPMforstable}. 
        It is worth mentioning that, among the above identification methods, those presented in  \cite{MillerCallafon13SubspcIdentifwithEigenCstrs,LacyBernstein03SubspcIdentifwithStabUsingCstrOpt,Gesteletal01IdentifStabModelinSubspcIdentifUsingReg,Umenbergeretal18MaxLHIdentifStabLinDynSys, boots2007constraint, UmenbergerManchester19ConvBoundfoeEqErrinStabNonlinIdentif,UmenbergerManchester19SpecialIPMforstable} are based on convex optimization.

        In addition to the stability, another important characteristic in the identification is prior knowledge.
        As described in \cite[Chapter 16]{Ljung99SysIdentifBook}, identification should reflect the prior knowledge peculiar to the system, such as the nonnegativity of all or partial components of the system $A$.
        Nevertheless, the conventional identification methods are not able to generate a stable matrix $A$ with prior knowledge information.
        Indeed, to impose the knowledge,
   it is necessary to deal with their nonconvexity, which requires additional techniques from nonconvex optimization.
        
        In this paper, we propose a new method for identifying a system, from noisy data, that satisfies both asymptotic stability and prior knowledge. This method first discretizes the system, and then applies the prediction error method, which is formulated as solving a least squares optimization problem. However, to identify the desired system, we must incorporate both stability and prior knowledge into this optimization problem.     
 Motivated by the fact that an asymptotically stable linear system can be expressed in a port-Hamiltonian form \cite[Proposition 1]{Prajnaetal02LMItoStabLinPortCtrlHamilSys}, we handle the stability of the system through a Riemannian manifold as in \cite{Sato19RiemModelReductofStabLinSys}. Meanwhile, 
 we deal with the prior knowledge information as nonlinear constraints defined on the manifold.
After all, our identification method reduces to solving a constrained nonlinear optimization problem on a Riemannian manifold, called \RNLO{} for short.

        \RNLO{} is one of the general optimization classes proposed in \cite{Yangetal14OptCond} and researches for algorithms to solve \RNLO{} are in progress, for example, \cite{LiuBoumal19Simple,Obaraetal20RiemSQO}.
        Remarkably, the modeling enables us to identify a stable $A$ even when we use noisy observations, which usually spoil the stability in system identification.
        The modeling also allows us to take various prior knowledge into account by virtue of the generality of \RNLO{}.
        To solve \RNLO{}, we propose to apply the Riemannian sequential quadratic optimization method (\RSQO{}) that was presented recently in \cite{Obaraetal20RiemSQO}.
Strength of \RSQO{} is the global convergence property to Karush-Kuhn-Tucker (KKT) points of \RNLO{} and locally fast convergence speed.

        Our contributions are summarized as follows:
        \begin{enumerate}
            \item We propose a prediction error method to identify a linear system satisfying the asymptotic stability with prior knowledge from noisy state observations.
            The key ingredients of this method are formulating the identification problem as {\RNLO}
            and solving it with \RSQO{}.
            \item We conduct numerical experiments with comparisons to demonstrate that \RNLO{} modeling and \RSQO{} are very promising.
        \end{enumerate}

    \subsection*{Organization of paper}
        The rest of this paper is organized as follows.
        In the following subsection, we introduce notations and terminologies.
        In \cref{sec:probsetting}, we formulate the system identification problem as \RNLO{}.
        In \cref{sec:optmethod},  we introduce \RSQO{} of the form tailored to the obtained \RNLO{} and moreover show its theoretical properties.
        We also exploit the geometry of the problem.
        In \cref{sec:experiment}, we demonstrate that the proposed formulation and \RSQO{} are promising through experimental comparisons.
        Finally, in \cref{sec:conclusion}, we conclude this paper with some remarks and discussions about future work.

    \subsection*{Notations and terminologies}
        The sets of real numbers and complex ones are denoted by $\setR$ and $\setC$, respectively.
        Let $\setN$ be the set of all the natural numbers, that is, $\setN := \brc*{1,2,\ldots}$.
        We denote the identity matrix of size $\dime$ by $\eyemat \in \setR[\dime \times \dime]$.
        Given a matrix $\mat \in \setR[\dime \times \dime]$, $\trsp{\mat}$ and $\tr\paren*{\mat}$ represent the transpose of $\mat$ and the trace of $\mat$, respectively.
        Let $\fronorm{\cdot}$ denote the Frobenius norm of a matrix, that is, $\fronorm{\mat} \coloneqq \sqrt{\tr\paren*{\trsp{\mat}\mat}}$ for any $\mat \in \setR[\dime \times \dime]$.
        Let $\norm{\cdot}$ denote the Euclidean norm of a vector, that is, $\norm{\vvec} \coloneqq \sqrt{\vvec[1]^{2} + \cdots + \vvec[\dime]^{2}}$ for any $\vvec \in \setR[\dime]$. 
        Define the $\vecidx$-th standard basis of $\setR[\dime]$ by $\stdbasis \coloneqq \paren*{0, \ldots, 0, 1, 0, \ldots, 0} \in \setR[\dime]$, where the $i$-th element is one and the others are zeros. %
        Given a sufficiently smooth function $\functhr\colon\setR[\dime \times \dime]\to\setR[]$,
        the Euclidean gradient of $\functhr$ at $\mat \in \setR[\dime \times \dime]$ is denoted by $\gradEucli \functhr \paren*{\mat}$.
        Let $\vecspco$ be any vector space and $\tanspc[\Wvec]\vecspco$ be a tangent space at $\Wvec\in\vecspco$.
        We canonically identify $\tanspc[\Wvec]\vecspco$ with $\vecspco$ and use the symbol $\simeq$ to denote the canonical identification.
        Under $\tanspc[\Wvec]\vecspco\simeq\vecspco$,
        given a sufficiently smooth function $\funcfive:\vecspco\to\setR$,
        we define the directional derivative of $\funcfive$ at $\Wvec \in \vecspco$ along $\tanvecone[\Wvec] \in \tanspc[\Wvec]\vecspco$ by
            $\D\funcfive\paren*{\Wvec}\sbra*{\tanvecone[\Wvec]} \coloneqq \lim_{\limparam \downarrow 0} \frac{\funcfive\paren*{\Wvec + \limparam\tanvecone[\Wvec]} - \funcfive\paren*{\Wvec}}{\limparam} \in \setR[]$.

    \section{Preliminaries}
       To formulate our problem as \RNLO{},
       we characterize the stability of the system in \cref{subsec:charstab}.
        To this end, we first summarize the Riemannian geometries of the sets of skew-symmetric matrices and  
        symmetric positive definite matrices.

       \subsection{Riemannian geometries of skew-symmetric matrices and symmetric positive definite matrices}

        We define the vector space of the skew-symmetric matrices in $\setR[\dime \times \dime]$ by
            $\skewmani \coloneqq \brc*{\J \in \setR[\dime \times \dime] \relmiddle{|} \trsp{\J} = - \J}$.
        $\skewmani$ can be regarded as a linear manifold, whose tangent space at $\J \in \skewmani$, denoted by $\tanspc[\J]\skewmani$, is canonically identified with $\skewmani$ itself.
        Under $\tanspc[\J]\skewmani \simeq \skewmani$, the Riemannian metric at $\J$ is defined as
            $\metr[\J]{\tanvecone[\J]}{\tanvectwo[\J]} \coloneqq \tr\paren*{\trsp{\tanvecone[\J]} \tanvectwo[\J]}$
        for any $\tanvecone[\J], \tanvectwo[\J] \in \skewmani$.
        Define an operator $\skewstr\colon\setR[\dime\times\dime]\to\skewmani$ by
            $\skewstr\paren*{\mat[]} \coloneqq \frac{\mat[] - \trsp{\mat[]}}{2}$.
        Let $\funcone\colon\skewmani\to\setR[]$ be a twice continuously differentiable function and $\extfuncone$ be the smooth extension of $\funcone$ to the Euclidean space $\setR[\dime\times\dime]$.
        The Riemannian gradient of $\funcone$ at $\J$ is
        \begin{align}\label{eq:Riemgradskew}
            \gradstr\funcone\paren*{\J} = \skewstr\paren*{\gradEucli\extfuncone\paren*{\J}} \in \skewmani,
        \end{align}
        where $\gradEucli\extfuncone\paren*{\J}$ is the Euclidean gradient of $\extfuncone$ at $\J$.
        The exponential mapping at $\J$ is given by
            $\expmap[\J]\paren*{\tanvecone[\J]} = \J + \tanvecone[\J]$. 
        for all $\tanvecone[\J] \in \skewmani$.
        
        Let us denote the set of symmetric positive definite matrices in $\setR[\dime \times \dime]$ by $\symposmani$, which is an open submanifold of $\setR[\dime \times \dime]$.
        The tangent space at $\Pmat \in \symposmani$, denoted by $\tanspc[\Pmat]\symposmani$, is canonically identified with the set of symmetric matrices denoted by $\symset$.
        Under $\tanspc[\Pmat]\symposmani\simeq\symset$, we equip $\symposmani$ with a Riemannian metric at $\Pmat \in \symposmani$ defined by
        \begin{align}\label{eq:Riemmetrsympos}
            \metr[\Pmat]{\tanvecone[\Pmat]}{\tanvectwo[\Pmat]} \coloneqq \tr\paren*{\inv{\Pmat}\tanvecone[\Pmat]\inv{\Pmat}\tanvectwo[\Pmat]}
        \end{align}
        for any $\tanvecone[\Pmat], \tanvectwo[\Pmat] \in \symset$~\cite{Jeuris12SurveyComparisonMatGeoMean}.
        Let us define an operator $\sym\colon\setR[\dime\times\dime]\to\symset$ by
            $\sym\paren*{\mat[]} \coloneqq \frac{\mat[]+ \trsp{\mat[]}}{2}$.
        Let $\functwo\colon\symposmani\to\setR$ be a twice continuously differentiable function and $\extfunctwo$ be the smooth extension of $\functwo$ to the Euclidean space $\setR[\dime\times\dime]$.
        The Riemannian gradient of $\functwo$ at $\Pmat$ is
        \begin{align}\label{eq:Riemgradsympos}
            \gradstr\functwo\paren*{\Pmat} = \Pmat\sym\paren*{\gradEucli\extfunctwo\paren*{\Pmat}}\Pmat,
        \end{align}
        where $\gradEucli\extfunctwo\paren*{\Pmat}$ is the Euclidean gradient of $\extfunctwo$ at $\Pmat$.
        On $\symposmani$ equipped with \cref{eq:Riemmetrsympos}, we introduce retraction \cite{Jeuris12SurveyComparisonMatGeoMean} by %
            $\retr[\Pmat]\paren*{\tanvecone[\Pmat]} = \Pmat + \tanvecone[\Pmat] + \frac{1}{2}\tanvecone[\Pmat]\inv{\Pmat}\tanvecone[\Pmat]$
        for all $\tanvecone[\Pmat] \in \symset$.
        See \cite{Absiletal08OptBook, Jeuris12SurveyComparisonMatGeoMean}
        for the details of the concepts and the notations on optimization on Riemannian manifolds and the geometries of the $\skewmani$ and $\symposmani$.

        \subsection{Characterizations of stability}\label{subsec:charstab}
        Let us start with the definitions of the stability: $A \in \setR[\dime \times \dime]$ is stable if the real parts of all the eigenvalues of the matrix $\A$ are negative.
        We say that the system \cref{eq:conttimesys} is asymptotically stable if $\A$ is stable.
        
        In the following proposition, a useful result for the stability is provided.
        The proof can be found in the literature, e.g., \cite[{Section III}]{Sato19RiemModelReductofStabLinSys} and \cite{Prajnaetal02LMItoStabLinPortCtrlHamilSys}.
        \begin{prop}\label{prop:charactstab}
            A matrix $\A \in \setR[\dime \times \dime]$ is stable if and only if there exists $\paren*{\J,\R,\Q} \in \skewmani \times \symposmani \times \symposmani$ such that 
            \begin{align}\label{eq:AJRQ}
                \A = \paren*{\J - \R} \Q.
            \end{align}
        \end{prop}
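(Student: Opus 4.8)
The plan is to prove the two implications separately, linking the factorization $\A = \paren*{\J-\R}\Q$ to the spectral condition defining stability through a Lyapunov-type argument, and exploiting the splitting $\mat = \skewstr\paren*{\mat} + \sym\paren*{\mat}$ of a square matrix into its skew-symmetric and symmetric parts.

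\emph{Sufficiency.} Suppose $\A = \paren*{\J-\R}\Q$ with $\J \in \skewmani$ and $\R, \Q \in \symposmani$. Let $\lambda$ be any eigenvalue of $\A$ and $\vvec \neq 0$ a corresponding eigenvector, possibly with complex entries. Putting $w \coloneqq \Q\vvec \neq 0$, the identity $\A\vvec = \lambda\vvec$ reads $\paren*{\J-\R}w = \lambda\inv{\Q}w$; left-multiplying by the conjugate transpose $w^{\ast}$ yields $w^{\ast}\paren*{\J-\R}w = \lambda\,w^{\ast}\inv{\Q}w$. Since $\inv{\Q}$ is symmetric positive definite, $w^{\ast}\inv{\Q}w$ is a positive real number, so it suffices to compare real parts: using that $\J$ is skew-symmetric and $\R$ is symmetric,
\begin{align*}
    2\Real\paren*{w^{\ast}\paren*{\J-\R}w} &= w^{\ast}\paren*{\J-\R}w + w^{\ast}\trsp{\paren*{\J-\R}}w \\
    &= w^{\ast}\sbra*{\paren*{\J-\R} + \paren*{-\J-\R}}w = -2\,w^{\ast}\R w < 0,
\end{align*}
because $\R$ is positive definite and $w \neq 0$. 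Hence $\Real\paren*{\lambda} < 0$, and since $\lambda$ was an arbitrary eigenvalue, $\A$ is stable.

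\emph{Necessity.} Suppose $\A$ is stable. Then $\trsp{\A}$ is stable as well, having the same eigenvalues as $\A$, so the Lyapunov stability theorem applied to $\trsp{\A}$ produces a symmetric positive definite $\Pmat$ with $\A\Pmat + \Pmat\trsp{\A} = -\eyemat$. I then set
\begin{align*}
    \Q \coloneqq \inv{\Pmat}, \qquad \J \coloneqq \skewstr\paren*{\A\Pmat}, \qquad \R \coloneqq -\sym\paren*{\A\Pmat}.
\end{align*}
By construction $\J \in \skewmani$, and $\Q \in \symposmani$ since $\Pmat$ is symmetric positive definite; moreover $\R = -\frac{1}{2}\paren*{\A\Pmat + \Pmat\trsp{\A}} = \frac{1}{2}\eyemat$ is symmetric positive definite, so $\R \in \symposmani$. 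Finally, since $\skewstr\paren*{\A\Pmat} + \sym\paren*{\A\Pmat} = \A\Pmat$, we have $\J - \R = \A\Pmat$, hence $\paren*{\J-\R}\Q = \A\Pmat\inv{\Pmat} = \A$, the desired factorization.

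\emph{Main obstacle.} Both arguments are short; the only step requiring care is the choice of Lyapunov equation in the necessity part. One must ensure that it is $\A\Pmat + \Pmat\trsp{\A}$ (and not $\trsp{\A}\Pmat + \Pmat\A$) that is negative definite --- equivalently, apply the Lyapunov theorem to $\trsp{\A}$ rather than to $\A$ --- so that the matrix split by $\skewstr$ and $\sym$ is precisely $\A\Pmat$; starting instead from $\trsp{\A}\Pmat + \Pmat\A = -\eyemat$ would lead to a factorization of the unwanted form $\A = \Q\paren*{\J-\R}$. Everything else reduces to routine manipulation of transposes, and the positive definiteness of $\R$ and $\Q$ is immediate from the explicit expressions $\R = \frac{1}{2}\eyemat$ and $\Q = \inv{\Pmat}$.
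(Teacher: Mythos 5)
Your proof is correct, and it is essentially the standard argument behind the result the paper cites rather than proves itself (the port-Hamiltonian characterization in Prajna et al.\ and Sato's Section III): sufficiency via the Rayleigh-quotient computation showing $\Real\paren*{\lambda}\,w^{\ast}\inv{\Q}w = -w^{\ast}\R w < 0$, and necessity via the Lyapunov equation $\A\Pmat + \Pmat\trsp{\A} = -\eyemat$ with the split $\A\Pmat = \skewstr\paren*{\A\Pmat} + \sym\paren*{\A\Pmat}$. Your remark about applying the Lyapunov theorem to $\trsp{\A}$ so that the factor $\Q$ lands on the right is exactly the point where care is needed, and you handle it correctly.
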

        This proposition motivates us to consider optimization problems with respect to $\paren*{\J,\R,\Q}$ to ensure the stability of the system.
        Note that, for any stable matrix $\A$, the triplet $\paren*{\J, \R, \Q}$ satisfying \cref{eq:AJRQ} is not unique.
        
        Define 
            $\mani \coloneqq \skewmani \times \symposmani \times \symposmani$
        and write
            $\prodvar \coloneqq \paren*{\J,\R,\Q} \in \mani$.
        Clearly, $\mani$ is a product Riemannian manifold and its tangent space at $\prodvar$ is expressed as
            $\tanspc[\prodvar]\mani \simeq \skewmani \times \symset \times \symset$,
        on which a Riemannian metric at $\prodvar$ is defined as
            $\metr[\prodvar]{\tanvecone[\prodvar]}{\tanvectwo[\prodvar]} \coloneqq \tr\paren*{\trsp{\tanvecone[\J]}\tanvectwo[\J]} + \tr\paren*{\inv{\R}\tanvecone[\R]\inv{\R}\tanvectwo[\R]} + \tr\paren*{\inv{\Q}\tanvecone[\Q]\inv{\Q}\tanvectwo[\Q]}$
        for any $\tanvecone[\prodvar] = \paren*{\tanvecone[\J], \tanvecone[\R], \tanvecone[\Q]}, \tanvectwo[\prodvar] = \paren*{\tanvectwo[\J], \tanvectwo[\R], \tanvectwo[\Q]} \in \tanspc[\prodvar]\mani$.
        We use the retraction
            $\retr[\prodvar]\paren*{\tanvecone[\prodvar]}
            = \paren*{\J+\tanvecone[\J], \R + \tanvecone[\R] + \frac{1}{2}\tanvecone[\R]\inv{\R}\tanvecone[\R], \Q + \tanvecone[\Q] + \frac{1}{2}\tanvecone[\Q]\inv{\Q}\tanvecone[\Q]}$
        for all $\tanvecone[\prodvar] = \paren*{\tanvecone[\J], \tanvecone[\R], \tanvecone[\Q]}\in \tanspc[\prodvar]\mani$.
        We refer readers to \cite{Absiletal08OptBook} for the geometry of a product manifold.

        \section{Problem setup} \label{sec:probsetting}
        In this section, we formulate optimization problems for the identification of a stable linear system with prior knowledge and derive optimality conditions.
        
        \subsection{Problem formulations}\label{subsec:prederr}
        In this paper, we assume that a system to be identified is of the form \cref{eq:conttimesys} with stable $A$. Thus, from Proposition \ref{prop:charactstab},
        a discretized model with noise $\varepsilon_k$ of the system by the Euler method is expressed by 
        \begin{align}\label{eq:Eulermethod}
            \state[\stateidx + 1] = \paren*{\eyemat + \sampintvl (J-R)Q} \state + \varepsilon_k,
        \end{align}
        where $\sampintvl > 0$ is the sampling interval.
        That is, the noise $\varepsilon_k$ can be interpreted as the prediction error at time $k$.
        From \cref{eq:Eulermethod}, we can define the least-square function
         \begin{align}
            \objfun\paren*{\prodvar} &\coloneqq \frac{1}{N}\sum_{k=0}^{N-1} \|\varepsilon_k\|^2 \nonumber\\
            &=\frac{1}{\obsnum}\fronorm{\statesetshifted - \paren*{\eyemat + \sampintvl\paren*{\J-\R}\Q}\stateset}^{2}, \label{eq:Riemobjfun}
        \end{align}
        where
            $\stateset \coloneqq 
            \begin{pmatrix}
            \state[0] & \state[1]& \cdots  &\state[\obsnum-1]
            \end{pmatrix}\in \setR[\dime \times \obsnum]$ and
            $\statesetshifted \coloneqq 
            \begin{pmatrix}
            \state[1] & \state[2] & \cdots & \state[\obsnum]
            \end{pmatrix}\in \setR[\dime \times \obsnum]$.
Note that the system \eqref{eq:Eulermethod} is
a nonlinear regression model,
because there exists a multiplication of the parameters $J-R$ and $Q$.

        Using the least-square function \eqref{eq:Riemobjfun},
        we consider the following optimization problem:
        \begin{mini!}
            {\prodvar=\paren*{\J,\R,\Q} \in \mani}{\objfun\paren*{\prodvar}\label{eq:objfunRiemsysidentif}}%
            {\label{eq:Riemsysidentif}}{}
            \addConstraint{\ineqfun\paren*{\prodvar}}{\leq 0,}{\quad\paren*{\paren*{\matrowidx,\matcolidx} \in \ineqset}}
            \addConstraint{\eqfun\paren*{\prodvar}}{=0,}{\quad\paren*{\paren*{\matrowidx,\matcolidx} \in \eqset}.}
        \end{mini!}
        Here, $\ineqset, \eqset \subseteq \brc*{1,\ldots,\dime} \times \brc*{1,\ldots,\dime}$ are the index sets of inequality and equality constraints, respectively.
        Moreover, $\brc*{\ineqfun}_{\paren*{\matrowidx,\matcolidx} \in \ineqset}, \brc*{\eqfun}_{\paren*{\matrowidx,\matcolidx} \in \eqset}\colon \mani\to\setR[]$ are continuously differentiable functions.
        The problem~\cref{eq:Riemsysidentif} is a Riemannian nonlinear optimization problem (\RNLO{})~\cite{Yangetal14OptCond}, that is, a constrained nonlinear optimization problem over a Riemannian manifold.
         It should be noted that this formulation is novel for a stable linear system identification with prior knowledge.

        Any inequality and equality constraints are acceptable in \cref{eq:Riemsysidentif} as long as they are continuously differentiable with respect to $\prodvar$; for example, we can deal with nonnegativity (or nonpositivity) of the elements
            $\trsp{\stdbasis[\matrowidx]} \paren*{\J-\R}\Q \stdbasis[\matcolidx] \lesseqgtr 0$
        and an element equality, 
            $\trsp{\stdbasis[\matrowidx]} \paren*{\J-\R}\Q \stdbasis[\matcolidx] = \const[\matrowidx\matcolidx]$
        for a given constant $\const[\matrowidx\matcolidx] \in \setR[]$.
        In \cref{sec:experiment}, we consider box-type inequalities.

        \subsection{Optimality conditions}\label{subsec:optcond}
        In what follows, we define the Karush-Kuhn-Tucker (KKT) conditions and relevant concepts for \RNLO{}.
        We say that $\prodvaropt \in \mani$ satisfies the KKT conditions of \RNLO{}~\cref{eq:Riemsysidentif} if there exist Lagrange multipliers $\brc*{\ineqLagmultopt}_{\paren*{\matrowidx, \matcolidx} \in \ineqset} \subseteq \setR[]$ and $\brc*{\eqLagmultopt}_{\paren*{\matrowidx, \matcolidx} \in \eqset} \subseteq \setR[]$ such that the following hold:
	    \begin{align}\label{eq:KKTconds}
	        \begin{aligned}
	            &\gradstr\objfun\paren*{\prodvaropt}\\
	            &\qquad +\sum_{\paren*{\matrowidx, \matcolidx} \in \ineqset} \ineqLagmultopt \gradstr\ineqfun\paren*{\prodvar} + \sum_{\paren*{\matrowidx, \matcolidx} \in \eqset} \eqLagmultopt \gradstr\eqfun\paren*{\prodvar} = 0,\\
    	        &\ineqLagmultopt \geq 0, \ineqfun\paren*{\prodvaropt} \leq 0, \ineqLagmultopt \ineqfun\paren*{\prodvaropt} = 0, \quad \paren*{\paren*{\matrowidx, \matcolidx} \in \ineqset},\\
    	        &\eqfun\paren*{\prodvaropt} = 0, \quad \paren*{\paren*{\matrowidx, \matcolidx} \in \eqset},
            \end{aligned}
	    \end{align}
	    where the operator $\gradstr$ denotes the Riemannian gradient as in \cref{sec:introduction}.
        We call $\prodvaropt$ a KKT point of \cref{eq:Riemsysidentif}.
        For brevity, we often write $\ineqLagmultopt[] \in \setR[\abs{\ineqset}]$ and $\eqLagmultopt[] \in \setR[\abs{\eqset}]$ for $\brc*{\ineqLagmultopt}_{\paren*{\matrowidx, \matcolidx} \in \ineqset}$ and $\brc*{\eqLagmultopt}_{\paren*{\matrowidx, \matcolidx} \in \eqset}$, respectively.
        It is known that, under some conditions called constraint qualifications, the KKT conditions are necessary ones for the optimality; that is, a local minimizer satisfies the KKT conditions under constraint qualifications~\cite{Yangetal14OptCond,BergmannHerzog19IntKKT}.

    \section{Optimization methods}\label{sec:optmethod}
                 In this section, we introduce a specific method for solving \cref{eq:Riemsysidentif}, Riemannian sequential quadratic optimization (\RSQO{}) that was originally presented in \cite{Obaraetal20RiemSQO}.
        
        \subsection{Geometry of problems}\label{subsec:geometryofprobs}
            In the following theorem, we derive the Riemannian gradient of $\objfun$ specifically.
            \begin{theo}\label{prop:RiemgradHessofobjfuncs}
                Given $\prodvar=\paren*{\J,\R,\Q} \in \mani$, the Riemannian gradient of $\objfun$ in \cref{eq:Riemsysidentif} is
                \begin{align}
                    \begin{split}\label{eq:Riemgradobjfun}
                        &\gradstr\objfun\paren*{\prodvar}\\ &=(\skewstr\paren*{\gradpartobjfun[\J]\paren*{\prodvar}},\R\sym\paren*{\gradpartobjfun[\R]\paren*{\prodvar}}\R, \Q\sym\paren*{\gradpartobjfun[\Q]\paren*{\prodvar}}\Q),
                    \end{split}
                \end{align}
                where $\extobjfun$ is the smooth extension of $\objfun$ to the Euclidean space and
                \begin{align}
                    &\gradpartobjfun[\J]\paren*{\prodvar} = - \frac{2 \sampintvl}{\obsnum} \paren*{\statesetshifted - \paren*{\eyemat + \sampintvl \paren*{\J - \R}\Q} \stateset}\trsp{\stateset}\trsp{\Q}, \label{eq:EucligradobjfuncJ}\\
                    &\gradpartobjfun[\R]\paren*{\prodvar} = - \gradpartobjfun[\J]\paren*{\prodvar}, \label{eq:EucligradobjfuncR}\\  %
                    &\gradpartobjfun[\Q]\paren*{\prodvar} = -\frac{2 \sampintvl}{\obsnum} \paren*{\trsp{\J} - \trsp{\R}}\paren*{\statesetshifted - \paren*{\eyemat + \sampintvl \paren*{\J - \R}\Q}\stateset}\trsp{\stateset} \label{eq:EucligradobjfuncQ}
                \end{align}
                are the Euclidean gradients on $\J, \R,$ and $\Q$, respectively.
            \end{theo}
            \begin{proof}
 Note that the Riemannian gradient of the function on the product manifold is organized component-wisely~\cite{Absiletal08OptBook}.%
    Using this fact, %
    we derive the Riemannian gradient of our problems.
    Hereafter, we abbreviate $\setR[\dime\times\dime] \times\setR[\dime\times\dime]\times\setR[\dime\times\dime]$ as $\thrprodsetR$ for brevity.
            
              Let $\extobjfun$ be the smooth extension of $\objfun$ to the Euclidean space.
        We define $\extobjEulerfun\colon \thrprodsetR\to\setR[\dime]$ by
        \begin{align}\label{eq:extobjEulerfun}
            \extobjEulerfun\paren*{\extprodvar} \coloneqq \state[\stateidx + 1] - Z \state,
        \end{align}
        where $\extprodvar \coloneqq \paren*{\extJ, \extR, \extQ} \in \thrprodsetR$ and $Z \coloneqq \eyemat + \sampintvl \paren*{\extJ - \extR}\extQ$
        By substituting \cref{eq:extobjEulerfun} into \cref{eq:Riemobjfun}, we have
            $\extobjfun\paren*{\extprodvar} = \frac{1}{N} \sum_{\stateidx = 1}^{\obsnum -1} \trsp{\extobjEulerfun\paren*{\extprodvar}}\extobjEulerfun\paren*{\extprodvar}$
        and its directional derivative at $\extprodvar \in \thrprodsetR$ along $\exttanvecone[\extprodvar] = \paren*{\exttanvecone[{\extJ}], \exttanvecone[\extR], \exttanvecone[\extQ] } \in \thrprodsetR$ is 
        \begin{align}\label{eq:dirderivextobjfunwithextobjEulerfun}
            \D\extobjfun\paren*{\extprodvar}\sbra*{\exttanvecone[\extprodvar]} = \frac{2}{\obsnum} \trsp{\paren*{\D\extobjEulerfun\paren*{\extprodvar}\sbra*{\exttanvecone[\extprodvar]}}}\extobjEulerfun\paren*{\extprodvar}.
        \end{align}
        
        Now, we derive the explicit form of $\D\extobjEulerfun\paren*{\extprodvar}\sbra*{\exttanvecone[\extprodvar]}$ in \cref{eq:dirderivextobjfunwithextobjEulerfun}.
        To this end, we first calculate the directional derivative along $\paren*{\exttanvecone[{\extJ}], 0, 0}$ as
                $\D\extobjEulerfun\paren*{\extprodvar}\sbra*{\paren*{\exttanvecone[{\extJ}], 0, 0}}
                = - \sampintvl \exttanvecone[{\extJ}] \extQ \state$.
        Similarly, we have
            $\D\extobjEulerfun\paren*{\extprodvar}\sbra*{\paren*{0,\exttanvecone[{\extR}],0}} = \sampintvl \exttanvecone[{\extR}] \extQ \state$
          and $\D\extobjEulerfun\paren*{\extprodvar}\sbra*{\paren*{0, 0, \exttanvecone[{\extQ}]}} = - \sampintvl \paren*{\extJ - \extR} \exttanvecone[{\extQ}] \state$. %
        Thus, the explicit form of $\D\extobjEulerfun\paren*{\extprodvar}\sbra*{\exttanvecone[\extprodvar]}$ is
        \begin{align}
            \begin{split}\label{eq:explicitdirderivextobjfunwithextobjEulerfun}
               \D\extobjEulerfun\paren*{\extprodvar}\sbra*{\exttanvecone[\extprodvar]}
                = - \sampintvl \paren*{ \paren*{\exttanvecone[\extJ] - \exttanvecone[\extR]} \extQ + \paren{\extJ - \extR} \exttanvecone[\extQ]} \state.
            \end{split}
        \end{align}
        
        Using the result, we derive the Riemannian gradient of $\objfun$.
        By substituting \cref{eq:extobjEulerfun} and \cref{eq:explicitdirderivextobjfunwithextobjEulerfun} into \cref{eq:dirderivextobjfunwithextobjEulerfun}, we have
        \begin{align}
                & \D\extobjfun\paren*{\extprodvar}\sbra*{\exttanvecone[\extprodvar]}
                = \tr\paren*{\trsp{\exttanvecone[\extJ]} \sum_{\stateidx = 1}^{\obsnum - 1} - \frac{2 \sampintvl}{\obsnum}\bigl(\state[\stateidx + 1]- Z \state\bigr) \trsp{\state} \trsp{\extQ}} \nonumber\\
                &\quad+ \tr\Biggl(\trsp{\exttanvecone[\extR]} \sum_{\stateidx = 1}^{\obsnum - 1} \frac{2 \sampintvl}{\obsnum}\bigl(\state[\stateidx + 1] - Z \state\bigr) \trsp{\state} \trsp{\extQ}\Biggr) \label{eq:dirderivextobjfun}\\ 
                &\quad + \tr\paren*{\trsp{\exttanvecone[\extQ]} \sum_{\stateidx = 1}^{\obsnum - 1} -\frac{2 \sampintvl}{\obsnum}\paren*{\trsp{\extJ} - \trsp{\extR}}
                 \paren*{\state[\stateidx + 1] - Z \state} \trsp{\state}}.\nonumber
        \end{align}
        Here, it follows from $\state[\stateidx + 1] = \statesetshifted\stdbasis[\stateidx]$ and $\state[\stateidx] = \stateset\stdbasis[\stateidx]$ that
        \begin{align}
                \sum_{\stateidx = 1}^{\obsnum - 1} \paren*{\state[\stateidx + 1] - Z \state} \trsp{\state}
                &= \paren*{\statesetshifted - Z \stateset} \paren*{ \sum_{\stateidx = 1}^{\obsnum - 1} \stdbasis[\stateidx] \trsp{\stdbasis[\stateidx]}} \trsp{\stateset} \nonumber\\
                &= \paren*{\statesetshifted - Z \stateset}\trsp{\stateset},\label{eq:partdirderivextobjfun}
        \end{align}
        where the second equality holds from $\sum_{\stateidx = 1}^{\obsnum - 1} \stdbasis[\stateidx] \trsp{\stdbasis[\stateidx]} = \eyemat$.
        Thus, since $\mani\subseteq\thrprodsetR$ holds, combining \cref{eq:dirderivextobjfun} and \cref{eq:partdirderivextobjfun} %
        provides \eqref{eq:EucligradobjfuncJ}, \eqref{eq:EucligradobjfuncR}, and \eqref{eq:EucligradobjfuncQ}
        for any $\prodvar\in\mani$.
        By projecting \cref{eq:EucligradobjfuncJ}, \cref{eq:EucligradobjfuncR}, and \cref{eq:EucligradobjfuncQ} onto $\tanspc[\prodvar]\mani$ according to \cref{eq:Riemgradskew} and \cref{eq:Riemgradsympos},%
        we obtain the Riemannian gradient of the form \cref{eq:Riemgradobjfun}.
            \end{proof}    
            In a similar manner, we can derive the Riemannian gradient of constraints although we focus on that of the objective function here.
        
        \subsection{Riemannian sequential quadratic optimization}\label{subsec:eRSQO}
            In this subsection, we briefly explain the \RSQO{} for solving \cref{eq:Riemsysidentif} using notations and terminologies we have set up so far. See \cite[Section~3]{Obaraetal20RiemSQO} for more details.

  \subsubsection{Description of \RSQO{}}\label{subsec:descrofprpsdalgo}
                \RSQO{} is an iterative method. %
                Let $\prodvar[\iteridx] \in \mani$ be a current iterate. 
                \RSQO{} first solves the following quadratic subproblem to generate a search direction $\solvarrsqosub\in\tanspc[{\prodvar[\iteridx]}]\mani$:
                \begin{mini}
                    {\varrsqosub \in \tanspc[{\prodvar[\iteridx]}]\mani}{\quadobjfunrsqosub\paren*{\varrsqosub}}
                    {\label{eq:RSQOsubprob}}{}
                    \addConstraint{\linineqfunrsqosub\paren*{\varrsqosub}}{\leq 0,}{\quad \paren*{\paren*{\matrowidx,\matcolidx} \in \ineqset}}
                    \addConstraint{\lineqfunrsqosub\paren*{\varrsqosub}}{= 0,}{\quad \paren*{\paren*{\matrowidx,\matcolidx} \in \eqset}},
                \end{mini}
                where 
                \begin{align}
                    &\quadobjfunrsqosub\paren*{\varrsqosub} \coloneqq \frac{1}{2}\metr[{\prodvar[\iteridx]}]{\oprrsqosub\sbra{\varrsqosub}}{\varrsqosub} + \metr[{\prodvar[\iteridx]}]{\gradstr\objfun\paren*{\prodvar[\iteridx]}}{\varrsqosub},\label{def:objfunRSQOsubprob}\\
                    &\linineqfunrsqosub\paren*{\varrsqosub}\coloneqq \ineqfun\paren*{\prodvar[\iteridx]} + \metr[{\prodvar[\iteridx]}]{\gradstr\ineqfun\paren*{\prodvar[\iteridx]}}{\varrsqosub},\label{def:linineqRSQOsubprob}\\
                    &\lineqfunrsqosub\paren*{\varrsqosub}\coloneqq \eqfun\paren*{\prodvar[\iteridx]} + \metr[{\prodvar[\iteridx]}]{\gradstr\eqfun\paren*{\prodvar[\iteridx]}}{\varrsqosub},\label{def:lineqRSQOsubprob}
                \end{align}
                and $\oprrsqosub\colon\tanspc[{\prodvar[\iteridx]}]\mani\to\tanspc[{\prodvar[\iteridx]}]\mani$ is a symmetric positive-definite linear operator.
                Since the problem can be expressed as a Euclidean convex quadratic optimization problem, it can be solved by, for example, interior-point methods or active set methods.
 We denote the Lagrange multipliers 
 at the solution $\solvarrsqosub$ 
 corresponding to the inequality and equality constraints
 by $\solineqLagmultrsqosub[] \in \setR[\abs{\ineqset}]$ and $\soleqLagmultrsqosub[] \in \setR[\abs{\eqset}]$, respectively.

                Next, \RSQO{} determines the step length by using the $\ell_{1}$ penalty function defined as
                \begin{align*}
                    \ellonepenafun[]\paren*{\prodvar} \coloneqq \objfun\paren*{\prodvar} + \penaparamrsqo[]\paren*{\sum_{\paren*{\matrowidx,\matcolidx} \in \ineqset} \max\brc*{0,\ineqfun\paren*{\prodvar}} + \sum_{\paren*{\matrowidx,\matcolidx} \in \eqset} \abs{\eqfun\paren*{\prodvar}}},
                \end{align*}
                where $\penaparamrsqo[] > 0$ is a penalty parameter~\cite{LiuBoumal19Simple}.
                \RSQO{} first sets
                \begin{align}\label{eq:updatepenaparam}
                    \penaparamrsqo =
                    \begin{cases}
                        \penaparamrsqo[\iteridx - 1], & \text{if } \penaparamrsqo[\iteridx - 1] \geq \maxabsLagmult, \\
                        \maxabsLagmult + \svalpenaupd, & \text{otherwise},
                    \end{cases}
                \end{align}
                where $\maxabsLagmult \coloneqq \max\brc*{\max_{\paren*{\matrowidx, \matcolidx} \in \ineqset} \solineqLagmultrsqosub, \max_{\paren*{\matrowidx,\matcolidx} \in \eqset} \abs{\soleqLagmultrsqosub}}$ and $\svalpenaupd > 0$ is a prescribed algorithmic parameter.
                Then, by using $\ellonepenafun$ as a merit function, we find the smallest nonnegative integer $\armijoint$ satisfying
                \begin{align}
                    \begin{split}\label{eq:linesearchrule}
                        &\armijoconst \armijoratio[\armijoint] \metr[{\prodvar[\iteridx]}]{\oprrsqosub\sbra{\solvarrsqosub}}{\solvarrsqosub} \\
                        &\quad \leq \ellonepenafun\paren*{\prodvar[\iteridx]} - \ellonepenafun \paren*{\retr[{\prodvar[\iteridx]}] \paren*{\armijoratio[\armijoint] \solvarrsqosub}}
                    \end{split}
                \end{align}
                and set $\steplength = \armijoratio[\armijoint]$.
                Using the search direction and the step length, \RSQO{} updates the iterate.
                The above procedure is formalized as in \cref{algo:eRSQO}.
                
                \begin{algorithm}[t]
                    \SetKwInOut{Require}{Require}
                    \SetKwInOut{Input}{Input}
                    \SetKwInOut{Output}{Output}
                    \Input{Initial point $\prodvar[0] \in \mani$; initial linear operator $\oprrsqosub[0]\colon\tanspc[{\prodvar[0]}]\mani\to\tanspc[{\prodvar[0]}]\mani$; hyperparameters
                    $\penaparamrsqo[-1] > 0, \svalpenaupd > 0, \armijoratio[] \in \paren*{0,1}, \armijoconst \in \paren*{0,1}$; } %
                    \For{$\iteridx = 0,1,\ldots$}{
                        Compute $\solvarrsqosub$ with associated Lagrange multipliers $\solineqLagmultrsqosub$ and $\soleqLagmultrsqosub$ by solving \cref{eq:RSQOsubprob}\;
                        Update $\penaparamrsqo$ according to \cref{eq:updatepenaparam}\;
                        Determine the integer $\armijoint$ according to the line search rule \cref{eq:linesearchrule} and set $\steplength = \armijoratio[\armijoint]$\;
                        Update $\prodvar[\iteridx + 1] = \retr[{\prodvar[\iteridx]}]\paren*{\steplength\solvarrsqosub},
                            \ineqLagmult^{\iteridx + 1} = \solineqLagmultrsqosub$ for $\paren*{\matrowidx,\matcolidx} \in \ineqset$, and $\eqLagmult^{\iteridx + 1} = \soleqLagmultrsqosub$ for $\paren*{\matrowidx,\matcolidx} \in \eqset$.
                    }
                \caption{Riemannian sequential quadratic optimization (\RSQO{})}\label{algo:eRSQO}
            \end{algorithm}

\subsubsection{Convergence properties}

First, the line search terminates within finitely many trials in \RSQO{}, because 
the search direction is a descent direction for $\ellonepenafun\circ\retr[{\prodvar[\iteridx]}]$. For details, see \cite[Proposition~3.9, Remark~3.10]{Obaraetal20RiemSQO}.  
Next, we show that \RSQO{} has the global convergence property under the following assumptions, standard in nonlinear optimization. 
            \begin{assu}\label{assu:opruniformbound}
                There exist $\lboprrsqosub,\uboprrsqosub>0$ such that, for any $\iteridx$,
                \begin{align*}
                   \lboprrsqosub \norm{\varrsqosub}_{\prodvar[\iteridx]}^{2} \leq \metr[{\prodvar[\iteridx]}]{\oprrsqosub \sbra*{\varrsqosub}}{\varrsqosub} \leq M \norm{\varrsqosub}_{\prodvar[\iteridx]}^{2}
                \end{align*}
               for all $\varrsqosub \in \tanspc[{\prodvar[\iteridx]}]\mani$.
            \end{assu}
            \begin{assu}\label{assu:feas}
 The subproblem\,\cref{eq:RSQOsubprob} is feasible at each iteration. 
            \end{assu}
            \begin{assu}\label{assu:geneseqbound}
                The generated sequence $\brc*{\paren*{\prodvar[\iteridx], {\ineqLagmult[]}^{\iteridx},{\eqLagmult[]}^{\iteridx}}}_{\iteridx}$ is bounded. Here, the boundedness of
                $\{\prodvar[\iteridx]\}$ is with respect to the distance induced from the Riemannian metric, while 
                those of $\{{\ineqLagmult[]}^{\iteridx}\}$ and $\{{\eqLagmult[]}^{\iteridx}_{\iteridx}\}$ are in the sense of the Euclidean distance.
            \end{assu}
\cref{assu:geneseqbound,assu:feas} are not easy to check in prior. But, we observed that they were not violated in the numerical experiments. By \cref{assu:geneseqbound},
any accumulation point of $\{\theta^k\}$ stays at $\symposmani$.   
 \begin{theo}{\cite[Theorem~3.12]{Obaraetal20RiemSQO}}\label{theo:globconvRSQO}
                Suppose \cref{assu:opruniformbound,assu:feas,assu:geneseqbound}.
                Let $\paren*{\prodvaropt, \ineqLagmult[]^{\optsymbol}, \eqLagmult[]^{\optsymbol}}$ be any accumulation point of $\brc*{\paren*{\prodvar[\iteridx], \ineqLagmult[]^{\iteridx+1}, \eqLagmult[]^{\iteridx+1}}}$ generated by \RSQO{}. 
                Then, $\paren*{\prodvaropt, \ineqLagmult[]^{\optsymbol}, \eqLagmult[]^{\optsymbol}}$ satisfies the KKT conditions of \RNLO{} \cref{eq:Riemsysidentif}.
            \end{theo}

    \section{Numerical simulations}\label{sec:experiment}
    In this section, we demonstrate the effectiveness of our \RNLO{} modeling and \RSQO{}.
    We conduct numerical experiments on a random system and make comparisons.
    In \cref{subsec:probsetrandom}, we introduce a problem setting as well as other modelings for the comparisons.
    In \cref{subsec:exptenv}, we describe the solver settings and an evaluation index for the experiments.
    In \cref{subsec:exptresult}, we show the numerical results and discuss the effect of choice of the modelings and the algorithms.
    All the experiments are implemented in Matlab\_R2023a and Manopt~\cite{Boumaletal14Manopt} on a Windows 10 Pro with 2.60 GHz Core i9-11980HK CPU and 64.0 GB memory.
    
    \subsection{Problem setting: synthetic system}\label{subsec:probsetrandom}
        In this experiment, we consider the following synthetic system:
        \begin{mini!}
            {\prodvar=\paren*{\J,\R,\Q} \in \mani}{\objfun\paren*{\prodvar}=\frac{1}{\obsnum}\fronorm{\statesetshifted - \paren*{\eyemat + \sampintvl\paren*{\J-\R}\Q}\stateset}^{2}\label{eq:Cost1b2bRiemsysidentif}} 
            {\label{eq:1b2bRiemsysidentif}}{}
            \addConstraint{\leftconst \leq \trsp{\stdbasis[\matrowidx]} \paren*{\J-\R}\Q \stdbasis[\matcolidx] \leq \rightconst\notag}
            \addConstraint{\qquad\qquad\qquad\qquad\paren*{\paren*{\matrowidx,\matcolidx} \in \ineqoneboxset\cup\ineqtwoboxset}\label{eq:IneqOne1b2bRiemsysidentif}}
            \addConstraint{\lengthconst^2 \leq \paren*{\trsp{\stdbasis[\matrowidx]} \paren*{\J-\R}\Q \stdbasis[\matcolidx] - \centerconst}^2\notag}
            \addConstraint{\qquad\qquad\qquad\qquad\paren*{\paren*{\matrowidx,\matcolidx} \in \ineqtwoboxset},\label{eq:IneqTwo1b2bRiemsysidentif}}
        \end{mini!}
         where $\ineqoneboxset, \ineqtwoboxset$ are disjoint subsets of the whole indices $\brc*{1,\ldots,\dime}\times\brc*{1,\ldots,\dime}$.
        $\ineqoneboxset$ is the index set of $1$-box constraints, for each $\paren*{\matrowidx,\matcolidx}$ of which the $\paren*{\matrowidx,\matcolidx}$-th component of the true system belongs to $\sbra{\leftconst,\rightconst} \subseteq \setR[]$.
        Similarly, $\ineqtwoboxset$ is the one of $2$-box constraints, for each 
        $\paren*{\matrowidx,\matcolidx}$ of which the true $\paren*{\matrowidx,\matcolidx}$-th element is assumed to lie in $\sbra{\leftconst,\centerconst-\lengthconst}\cup\sbra{\centerconst+\lengthconst,\rightconst}$.
        In a similar manner to \cref{prop:RiemgradHessofobjfuncs}, we derive the Riemannian gradients of the constraints.

        We will compare the following two modelings with the above \RNLO{} modeling.
        One is the Euclidean version of the prediction error method with prior knowledge (Euclidean nonlinear optimization; \ENLO{}):
        \begin{mini!}
            {\A \in \setR[\dime \times \dime]}{\frac{1}{\obsnum}\fronorm{\statesetshifted - \paren*{\eyemat + \sampintvl\A}\stateset}^{2}\label{eq:CostEuclisysidentif}} 
            {\label{eq:Euclisysidentif}}{}
            \addConstraint{\leftconst \leq \trsp{\stdbasis[\matrowidx]} \A \stdbasis[\matcolidx] \leq \rightconst,\paren*{\paren*{\matrowidx,\matcolidx} \in \ineqoneboxset\cup\ineqtwoboxset}}
            \addConstraint{\lengthconst^2 \leq \paren*{\trsp{\stdbasis[\matrowidx]} \A \stdbasis[\matcolidx] - \centerconst}^2,\paren*{\paren*{\matrowidx,\matcolidx} \in \ineqtwoboxset},}
        \end{mini!}
        which is obtained by replacing $\paren*{\J-\R}\Q$ with $\A$ in \cref{eq:1b2bRiemsysidentif}, respectively.
        Since this problem does not impose the stability condition on $\A$, the solutions are not necessarily stable.
The other is the Riemannian modeling that minimizes \eqref{eq:Cost1b2bRiemsysidentif} under the absence of any constraints.

        In the experiments, we consider the case $\dime=10, \sampintvl=0.02,$ and $\obsnum=40$. 
Indices of $\ineqoneboxset$ are randomly picked up from $\{1,2,3,\ldots, 10\}^2$, and then those of $\ineqtwoboxset$ are from $\{1,2,3,\ldots, 10\}^2\setminus \ineqoneboxset$. 
        We randomly generate $\paren*{\Jtrue,\Rtrue,\Qtrue}\in\mani$ using Manopt command M.rand() and then let the true system $\Atrue \coloneqq \paren*{\Jtrue - \Rtrue} \Qtrue \in \setR[\dime\times\dime]$.
        For each $(i,j)\in \mathcal{I}_1$, 
        $l_{ij}$ and $r_{ij}$ are randomly generated so that  $e_i^{\top}\Atrue e_j\in \sbra{\leftconst,\rightconst}$. 
        As well, for each $(i,j)\in\mathcal{I}_2$, 
        $l_{ij}, r_{ij}, k_{ij}$ and $c_{ij}$ are randomly generated so that $e_i^{\top}\Atrue e_j\in \sbra{\leftconst,\centerconst-\lengthconst}\cup\sbra{\centerconst+\lengthconst,\rightconst}$.
        We set the ratios of 1-box and 2-box constraints as $0.2$ and $0.1$, respectively.
        Each component of $\state[0]$ is generated uniformly at random in the range $(-1000,1000)$, 
         and $\brc{\state}_{\iteridx=1}^{\obsnum}$ 
        is determined according to $x_{k+1} = \exp (Ah)x_k$.
        Additive white Gaussian noise with a Signal-to-Noise Ratio (SNR) of $10$ dB or $20$ dB is then added to these values.
        We also scale the data by dividing all the elements by $\norm{\state[0]}$ for the sake of the numerical stability.
        We adopt randomly generated $\paren*{\J,\R,\Q}\in\mani$ as the initial point.
    \subsection{Experimental environment}\label{subsec:exptenv}
        We compare the following six pairs of modelings and algorithms:
        \begin{itemize}
            \item \textbf{\RNLObyRSQO{}}: our \RNLO{} modeling \cref{eq:1b2bRiemsysidentif} and \RSQO{} method proposed in \cite{Obaraetal20RiemSQO}.
            \item \textbf{EE}: the Euclidean nonlinear modeling \cref{eq:Euclisysidentif} and \fminconSQO{}, an Euclidean SQO or SQP solver.
            \item \textbf{EEP}: the Euclidean nonlinear modeling \cref{eq:Euclisysidentif} and \fminconSQO{} with the projection to the stable matrix set proposed in \cite{noferini2021nearest}.
            \item \textbf{\LS}: the Euclidean nonlinear modeling \cref{eq:CostEuclisysidentif} without the constraints and \fminconSQO{}.%
            \item \textbf{\GD}: the Riemannian modeling \eqref{eq:Cost1b2bRiemsysidentif} without the constraints and the Riemannian gradient decent method \cite{Absiletal08OptBook}.
            \item \textbf{\RTR}: the Riemannian modeling \eqref{eq:Cost1b2bRiemsysidentif} without the constraints and the Riemannian trust region method \cite{Absiletal08OptBook}.
        \end{itemize}
        
        We evaluate the quality of solutions of the optimization problems in view of the first largest real part of eigenvalues of $\A$, because it will govern the behavior of the system.
        We also monitor the optimization process of each algorithm with respect to the max violation; let
        \begin{align*}
            &\ineqleftfun\paren*{\prodvar} \coloneqq - \trsp{\stdbasis[\matrowidx]} \paren*{\J-\R}\Q \stdbasis[\matcolidx] + \leftconst, \quad \paren*{\paren*{\matrowidx,\matcolidx} \in \ineqoneboxset\cup\ineqtwoboxset},\\%\label{def:ineqleftfun}\\
            &\ineqrightfun\paren*{\prodvar} \coloneqq \trsp{\stdbasis[\matrowidx]} \paren*{\J-\R}\Q \stdbasis[\matcolidx] - \rightconst, \quad \paren*{\paren*{\matrowidx,\matcolidx} \in \ineqoneboxset\cup\ineqtwoboxset},\\%\label{def:ineqrightfun}\\
            &\ineqtwofun\paren*{\prodvar} \coloneqq - \paren*{\trsp{\stdbasis[\matrowidx]} \paren*{\J-\R}\Q \stdbasis[\matcolidx] - \centerconst}^2 + \lengthconst^2, \quad \paren*{\paren*{\matrowidx,\matcolidx} \in \ineqtwoboxset}%
        \end{align*}
        be the constraints in \cref{eq:IneqOne1b2bRiemsysidentif,eq:IneqTwo1b2bRiemsysidentif}.
        The max violation at $\prodvar$ is defined as
        \begin{align*}
            \max\brc*{0, \max_{\paren*{\matrowidx,\matcolidx} \in \ineqtwoboxset}\brc*{\ineqtwofun\paren*{\prodvar}}, \max_{\paren*{\matrowidx,\matcolidx} \in \ineqoneboxset\cup\ineqtwoboxset}\brc*{\ineqleftfun\paren*{\prodvar}, \ineqrightfun\paren*{\prodvar}}}.
        \end{align*}
        Throughout the experiments, we employ the identity mapping as $\oprrsqosub$ in \cref{def:objfunRSQOsubprob}, which clearly satisfies \cref{assu:opruniformbound}.

    \subsection{Numerical results and discussion}\label{subsec:exptresult}

We solved 50 problem instances by the six pairs of modelings and algorithms. 
All the algorithms were run from the identical initial point, which was produced for each problem instance, and terminated after 100 seconds of CPU time.
The results are depicted in Figs. \ref{fig:eigen_snr10}, \ref{fig:eigen_snr20}, \ref{fig:relerr}, and \ref{fig:maxvio}.
Note that the scales in the subfigures within each figure significantly differ, and the result shown in Fig. 1 was obtained by randomly selecting a solution from one of the 50 problem instances.
Although SNR was $10$ dB for Fig. \ref{fig:eigen_snr10}, it was $20$ dB for the other figures.
Figs. \ref{fig:eigen_snr10} and \ref{fig:eigen_snr20} demonstrate the eigenvalue distributions obtained by the various methods.
Fig. \ref{fig:relerr} denotes the boxplots of the relative errors of the first largest real part of eigenvalue between true system and numerical solutions among 50 trials, respectively.
Fig. \ref{fig:maxvio} denotes the boxplots of the max violation among 50 trials, respectively.

As shown in Figs. \ref{fig:eigen_snr10}, \ref{fig:eigen_snr20}, and \ref{fig:relerr}, \RNLObyRSQO{} was considerably better than the other methods
in terms of the identification of the eigenvalues.
In particular, Fig. \ref{fig:relerr} showed that only \RNLObyRSQO{} can approximately estimate the most important eigenvalue, that is, the eigenvalue with the first largest real part.
Here, we omitted the relative error attained by EE, because the error was considerably larger than those attained by the other methods.

While we haven't shown the figure due to page limitations, an important observation needs to be highlighted. Using the same initial point, EE's objective value was superior to that of RNLO-by-RSQO. However, in terms of eigenvalues, RNLO-by-RSQO considerably outperformed EE. Furthermore, when EE was initialized with the matrix $A$ obtained from RNLO-by-RSQO, its resulting eigenvalues were inferior to those generated by RNLO-by-RSQO. These findings emphasize EE's tendency to overfit noisy data, contrasting with the robustness demonstrated by RNLO-by-RSQO.

According to Fig. \ref{fig:maxvio}, 
\RNLObyRSQO{} was significantly better than the other methods except for EE in terms of the max violation.
This result can be expected, because only RNLO-by-RSQO and EE incorporate prior information in the form of equality and inequality constraints on the parameters.
The abundance of outliers implies that EE is susceptible to noise.
Note that the max violation of EEP, which projects the unstable eigenvalues obtained by EE onto the imaginary axis, is larger than that of EE.
This is because the projection does not take into account equality or inequality constraints.

 It is remarkable that \RNLObyRSQO{} is superior to \GD{} and \RTR{} in terms of the eigenvalues, as shown in Figs. \ref{fig:eigen_snr10}, \ref{fig:eigen_snr20}, and \ref{fig:relerr}.
 This results from the fact that \GD{} and \RTR{} consider to solve \eqref{eq:Cost1b2bRiemsysidentif} without the constraints unlike \RNLObyRSQO{}.
In other words, the problem for \GD{} and \RTR{} does not incorporate any prior knowledge other than stability unlike RNLO-by-RSQO.
 Thus, \GD{} and \RTR{} generated convergence sequences to some bad solutions as in the results of Figs. \ref{fig:eigen_snr10}, \ref{fig:eigen_snr20}, and \ref{fig:relerr}.

In summary, the superiority of RNLO-by-RSQO can be attributed to the parametrization based on Proposition \ref{prop:charactstab} and the incorporation of prior knowledge concerning equality and inequality constraints.
A particularly noteworthy point is that RNLO-by-RSQO is more robust to noise compared to EE.
The only difference between these methods is whether or not they utilize the parametrization based on Proposition \ref{prop:charactstab}.

\begin{figure}[htbp]
        \centering
        \includegraphics[keepaspectratio, scale=0.4] {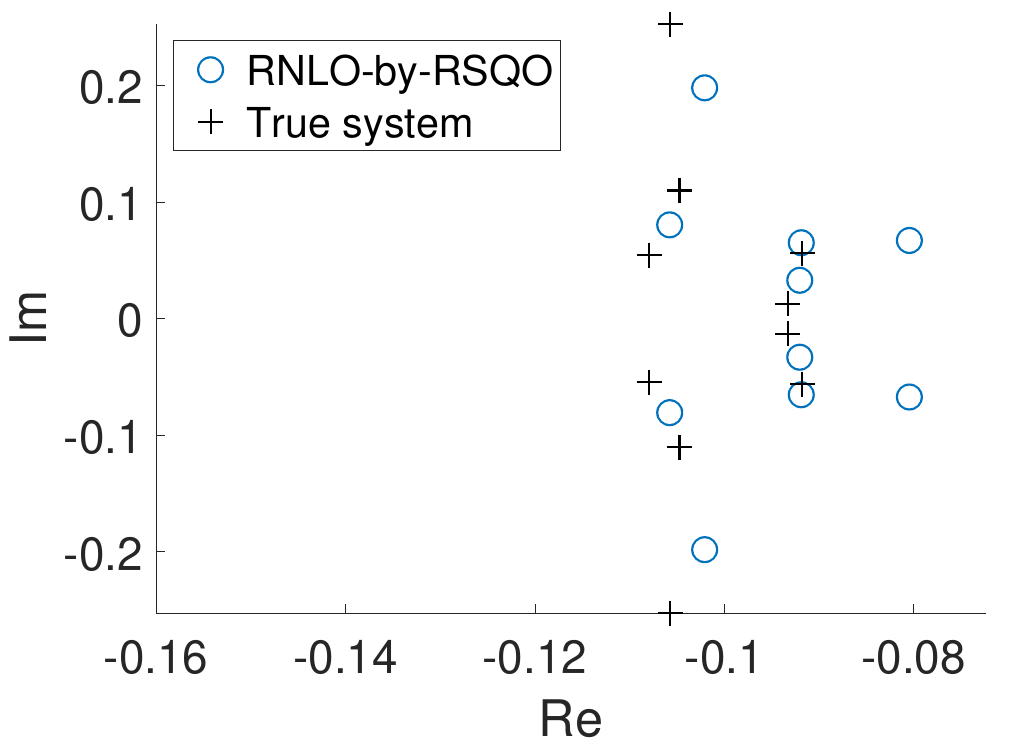}
     \vspace{0.5cm}
     
        \includegraphics[keepaspectratio, scale=0.4]{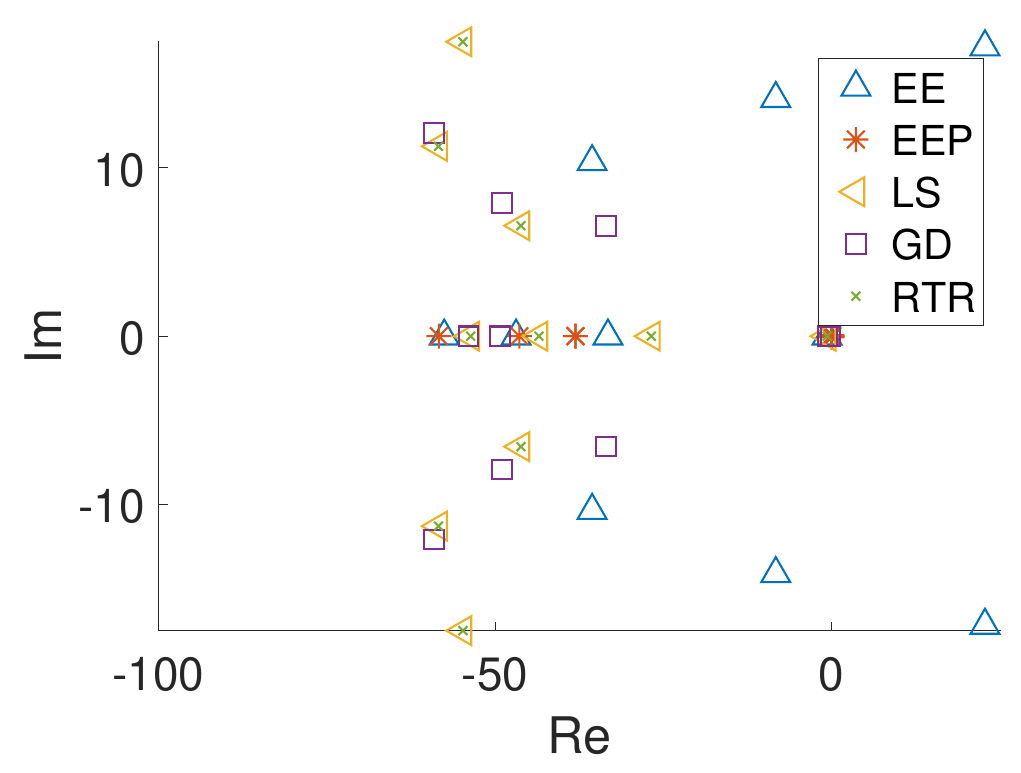}
    \caption{Eigenvalue distribution (SNR $=10$ dB)}
    \label{fig:eigen_snr10}
\end{figure}

\begin{figure}[htbp]
        \centering
        \includegraphics[keepaspectratio, scale=0.4]{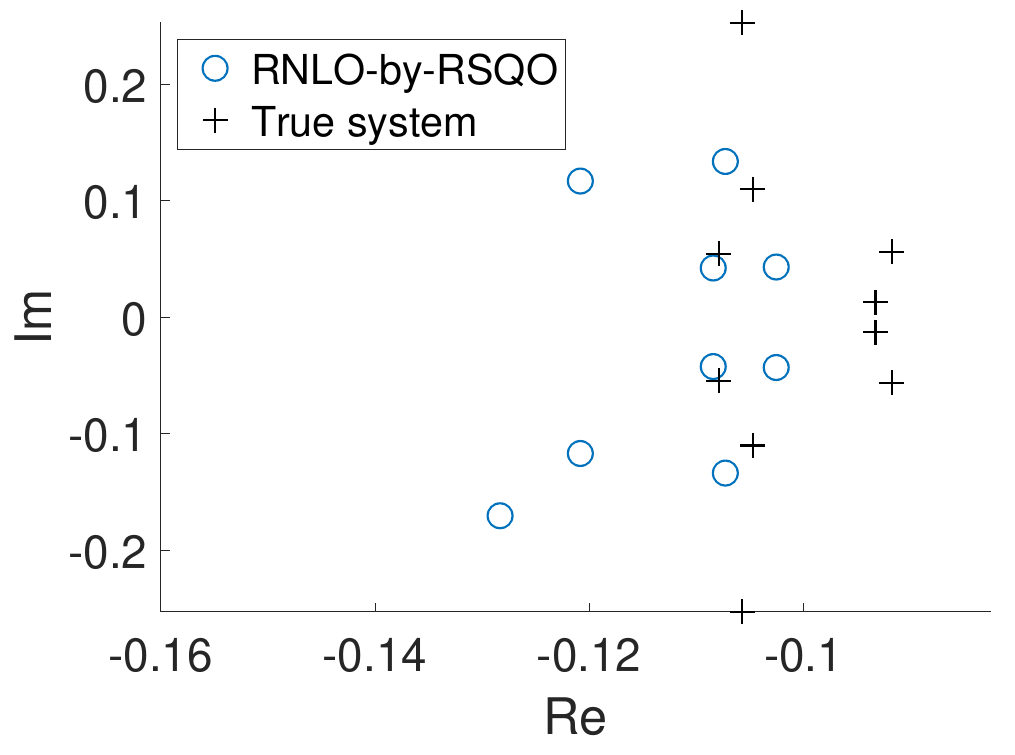}
      \vspace{0.5cm}
      
        \includegraphics[keepaspectratio, scale=0.4]{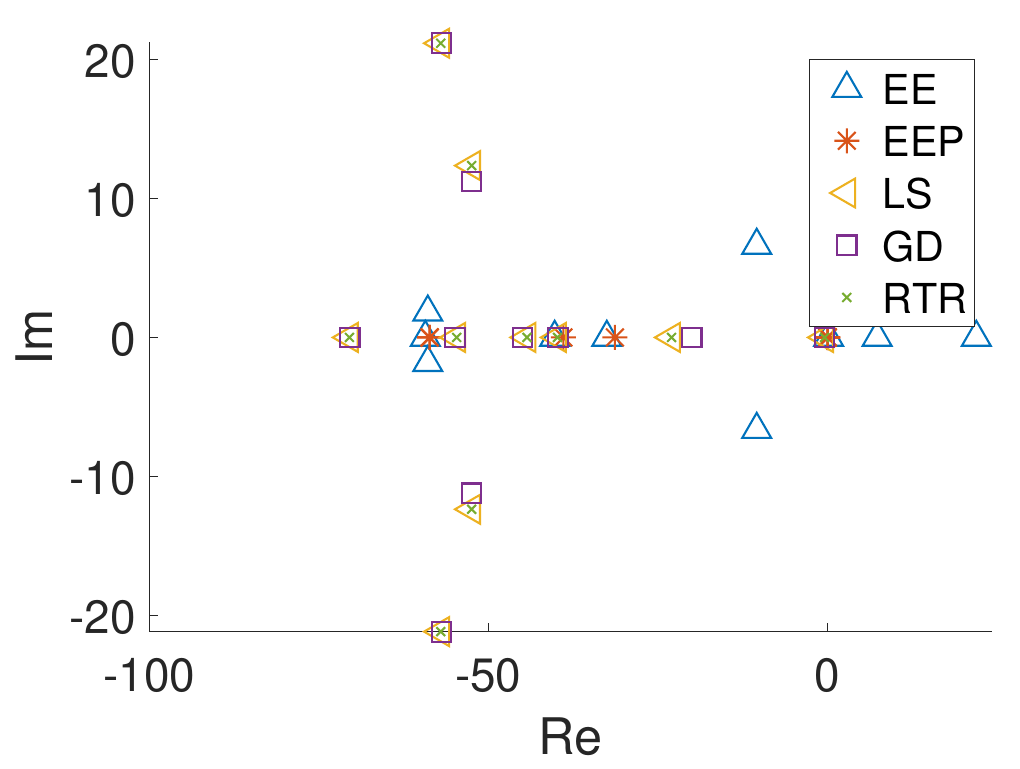}
    \caption{Eigenvalue distribution (SNR $= 20$ dB)}
    \label{fig:eigen_snr20}
\end{figure}

\begin{figure}[htbp]
        \centering
        \includegraphics[keepaspectratio, scale=0.34]{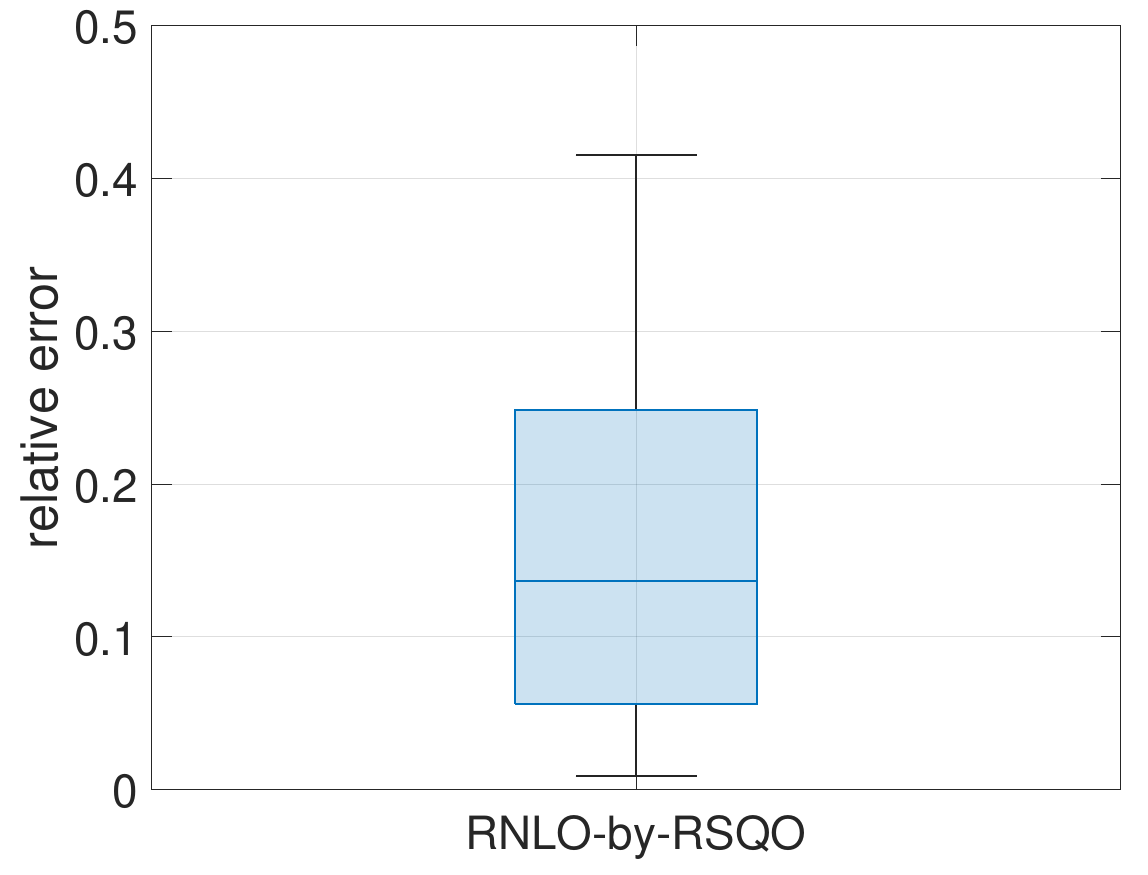}
\vspace{0.5cm}
   
        \includegraphics[keepaspectratio, scale=0.34]{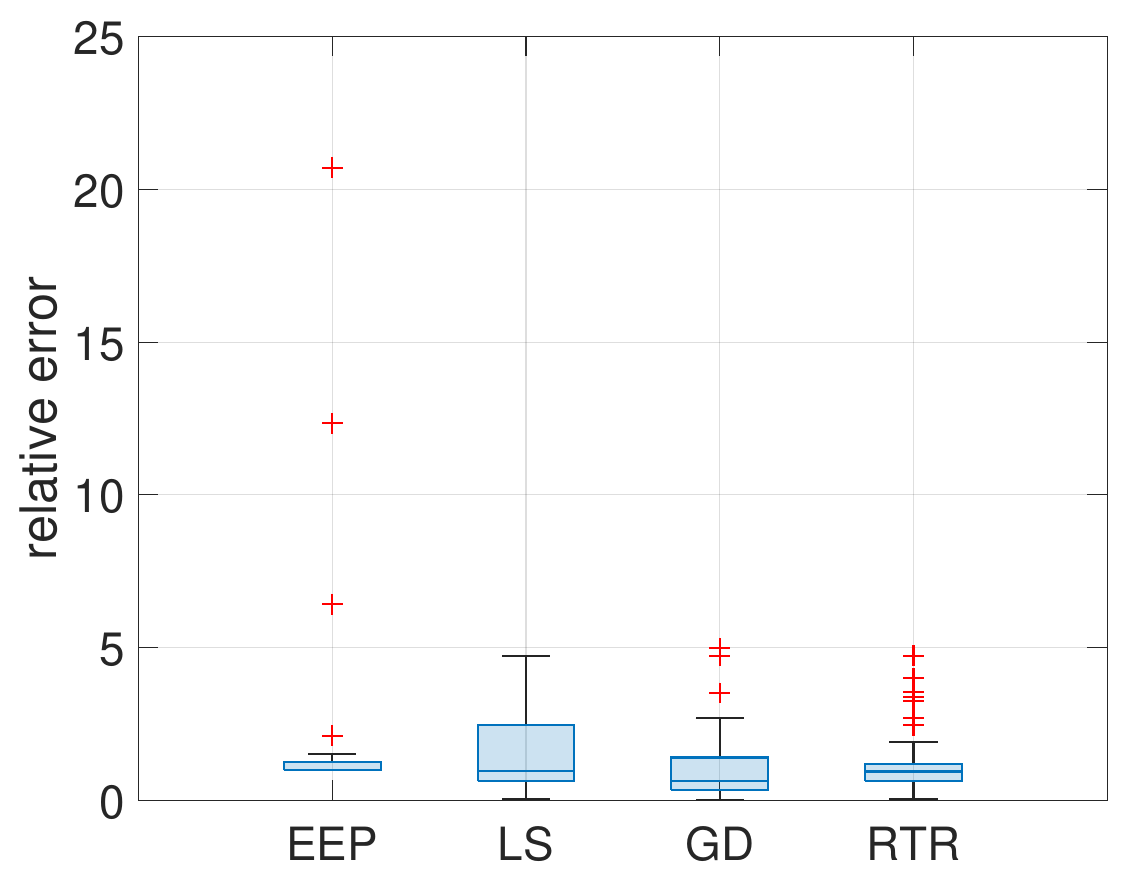}
    \caption{Relative error of the first largest real part of the eigenvalue among 50 trials (SNR $= 20$ dB)}
    \label{fig:relerr}
\end{figure}

\begin{figure}[htbp]
        \centering
        \includegraphics[keepaspectratio, scale=0.34]{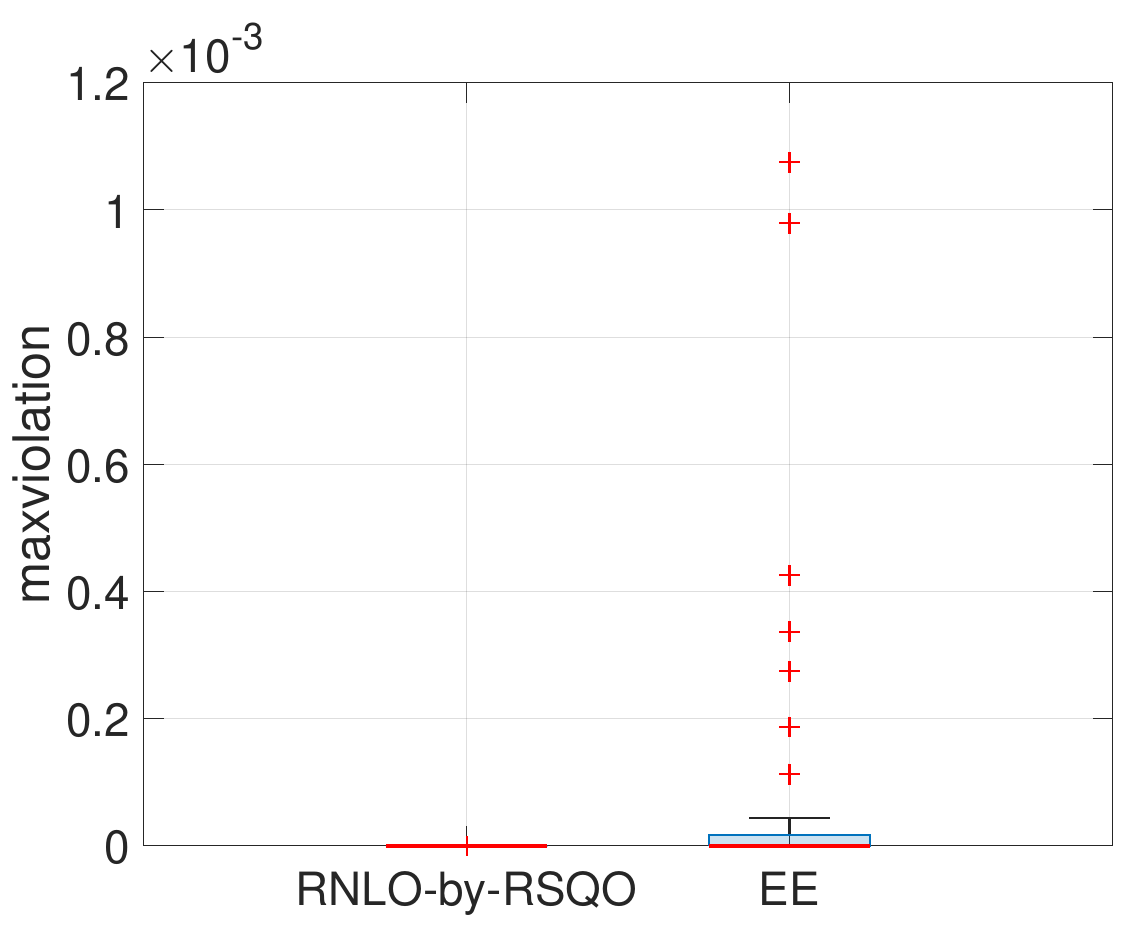}
   
      \vspace{0.5cm}
      
        \includegraphics[keepaspectratio, scale=0.34]{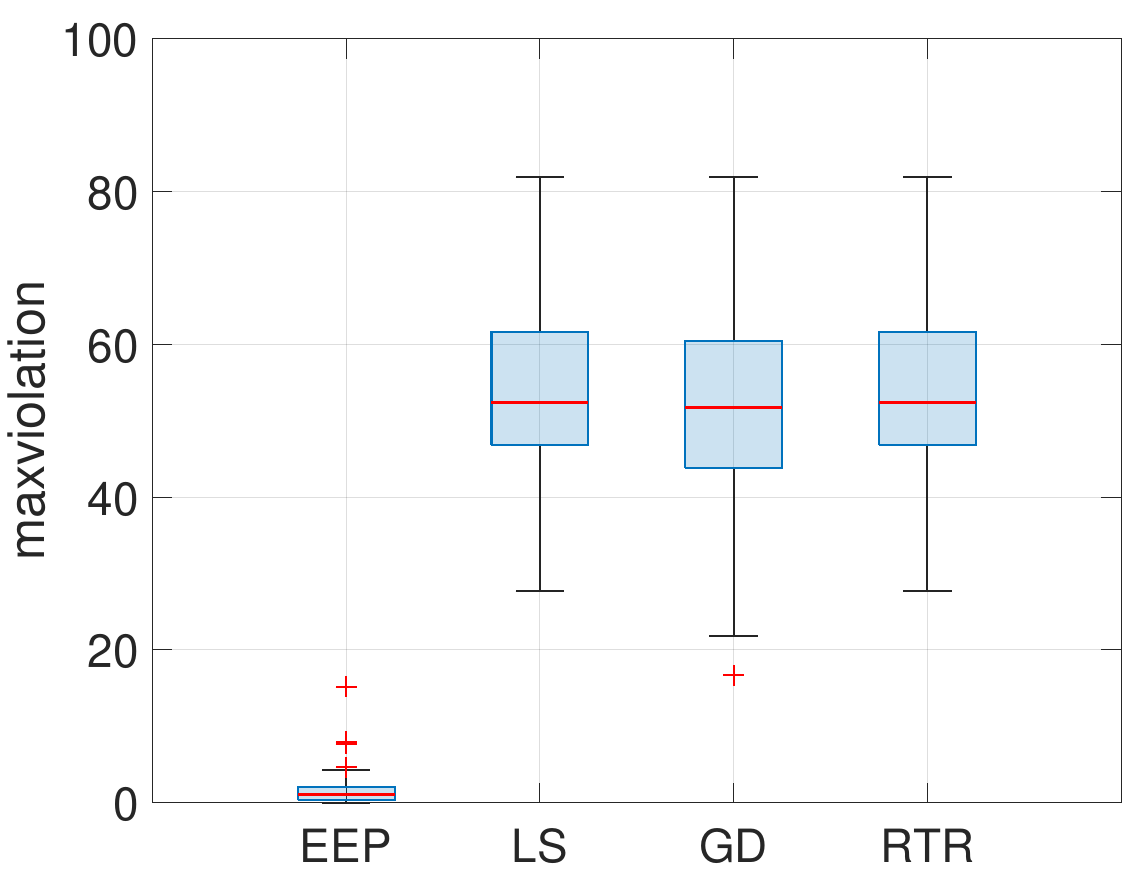}
    \caption{The max violation among 50 trials (SNR $= 20$ dB)}
       \label{fig:maxvio}
\end{figure}
    
    \section{Conclusion and future work}\label{sec:conclusion}
        We developed a prediction error method that ensures the stability of a linear system and meets with the prior knowledge.
        The method employs \RNLO{}, a class of the nonlinear optimization on a Riemannian manifold. For solving this \RNLO{},
        we proposed to use \RSQO{}.
        Numerical experiments with comparisons declare the effectiveness of our \RNLO{} formulation in terms of the stability of the system and prior knowledge together with the superiority of \RSQO{}.
        
        An initial point generally affects numerical results in nonlinear optimization, 
        and a sampling time $h$ also does the accuracy of the system identification. Hence, a good proposal for the selection of an initial point and a sampling time is a possible future research.
        Moreover, it should be noted that our approach presented in this paper does not always identify a stable system with prior knowledge information and sufficient accuracy.
        In fact, for example, in situations where there is little data and a large amount of noise, the estimated matrix using our approach can be a stable matrix near the imaginary axis on the complex plane even if the true eigenvalues are far away from the axis.
        This issue should be resolved in the future research.
        
        \printbibliography
\end{document}